\documentclass[11pt]{amsart}

\usepackage{graphicx}
\usepackage{color}

\usepackage{amsfonts,amsmath,latexsym,amssymb,amsthm}
\usepackage{hyperref}
\usepackage{geometry}
\usepackage{txfonts}
\usepackage{enumerate}

\newtheorem{theorem}{Theorem}

\newtheorem{lemma}{Lemma}
\theoremstyle{remark}

\begin{document}

\title{Magnetic field influence on the discrete spectrum of locally deformed leaky wires}

\author[D. Barseghyan]{Diana Barseghyan}
\address{Department of Mathematics, Faculty of Science, University of Ostrava, 30.~dubna 22, 70103 Ostrava, Czech Republic \\
E-mail: {diana.barseghyan@osu.cz}
}

\author[P. Exner]{Pavel Exner}
\address{Doppler Institute for Mathematical Physics and Applied Mathematics\\
Czech Technical University in Prague\\ B\v{r}ehov\'{a} 7, 11519 Prague, Czech Republic,
{\rm and}
Department of Theoretical Physics\\
Nuclear Physics Institute, Czech Academy of Sciences,
25068 \v{R}e\v{z}, Czech Republic\\
E-mail: {exner@ujf.cas.cz}
}

\keywords{Singular Schr\"odinger operators, magnetic field, discrete spectrum}
\subjclass[2010]{35J15; 35P15; 81Q10}

\maketitle

\begin{abstract}
We consider magnetic Schr\"odinger operator $H=(i \nabla +A)^2-\alpha \delta_\Gamma$ with an attractive singular interaction supported by a piecewise smooth curve $\Gamma$ being a local deformation of a straight line. The magnetic field $B$ is supposed to be nonzero and local. We show that the essential spectrum is $[-\frac14\alpha^2,\infty)$, as for the non-magnetic operator with a straight $\Gamma$, and demonstrate a sufficient condition for the discrete spectrum of $H$ to be empty.
\end{abstract}

\section{Introduction}
\label{s:intro}

It is well known that a magnetic field, even a local one, can influence substantially the behavior of waveguide systems, in particular their geometrically induced discrete spectrum. While a particle confined to a fixed-profile tube with Dirichlet boundary can exists in localized states whenever the tube is bent or locally deformed \cite{EK15}, the presence of a local magnetic field can destroy such a discrete spectrum. This is a consequence of a Hardy-type inequality\footnote{Note the the presence of a magnetic field is not the only effect that may cause an operator subcriticality, for another example see, e.g., \cite{BEK05}.} proved by Ekholm and Kova\v{r}\'{\i}k \cite{EK05}, for a more general form of this result see \cite[Thm~9.9]{Ra17}.

The analogous effect of bound state existence coming from the geometry of the interaction support was observed is a class of singular Schr\"odinger operators, usually dubbed \emph{leaky quantum wires}, with attractive contact interaction supported by a curve \cite[Chap.~10]{EK15}. Here too, as observed first in \cite{EI01}, perturbation of a straight line by a bend or local deformation induces, under suitable asymptotic straightness conditions, the existence of a nonempty discrete spectrum. The above waveguide result then leads to the natural question what would happen with such bound states in presence of a magnetic field. The aim of this letter is to provide a partial answer.

Let us mention that magnetic Schr\"odinger operators with such singular interactions have been studied recently, however, in a different situation where the field was homogeneous. In such a case that spectral picture changes completely. The magnetic field itself turns the essential spectrum into the family of infinitely degenerate Landau levels and the effect of the singular interaction depends on its support: a finite curve splits the Landau levels into clusters of eigenvalues \cite{BEHL20}, a straight line produces an absolutely continuous spectrum analogous to the well-known edges states \cite{DP99} or the Iwatsuka model \cite{Iw85, MP97}, and a non-straight support may give rise to isolated eigenvalues \cite{ELPO18}.

A local field is a much weaker perturbation which does not change the essential spectrum, the question is what has to happen so that the discrete spectrum would be destroyed. We are going to prove one such sufficient condition referring to the situation when the curve supporting the singular interaction is a local deformation of a straight line. To be more specific, we are going to consider operator that can be formally written as
\begin{subequations}
\label{Hamiltonian}
\begin{equation}\label{Hformal}
H=(i \nabla +A)^2-\alpha \delta_\Gamma,
\end{equation}
where  $A$ is a vector potential corresponding to  magnetic field $B$, $\alpha>0$, and $\Gamma$ is a curve of the form $\Gamma=(x, \gamma(x))$, where $\gamma(\cdot)$ a continuous, piecewise $C^1$ smooth function. There are several ways to define the operator properly, the simplest one is to identify it with the unique self-adjoint operator associated with the quadratic form
\begin{equation} \label{Hform}
q:\: q[\psi] = \int_{\mathbb{R}^2} |i\nabla+A|^2 \mathrm{d}x\mathrm{d}y -\alpha\int_{\Gamma} |\psi|^2 \mathrm{d}\mu, \quad \psi\in \mathcal{H}^1(\mathbb{R}^2),
\end{equation}
\end{subequations}
where $\mu$ is the measure on $\Gamma$ referring to the arc length of the curve. In addition to the stated regularity of $\Gamma$, we will assume that
\begin{enumerate}[(a)]
\setlength{\itemsep}{3pt}
\item the magnetic field $B=\mathrm{rot}A$ is compactly supported, $\mathrm{supp}(B)\subset\Omega:=(-a,a)^2$ for some $a>0$,
\item outside $\Omega$ the curve $\Gamma$ coincides with the straight line $\Gamma_0:=\{x, x\}_{x\in\mathbb{R}}$ and for $x\in (-a,a)$ it stays within $\Omega$, that is, $|\gamma(x)|\le a$.
\end{enumerate}
Let us note that if we rotate the axes passing to the coordinates $x\pm y$, the segment of $\Gamma$ in $\Omega$ may or may not be graph of a function. It is straightforward to check that under the stated assumption the form is closed and bounded from below, hence the operator $H$ is well defined. The operator domain of $H$ is the same as for the operator without the magnetic field consisting of functions $\psi\in \mathcal{H}^1(\mathbb{R}^2) \cap \mathcal{H}^2(\mathbb{R}^2 \setminus\Gamma)$ with the property that the normal derivative of $\psi$ at the point $x\in\Gamma$ has the jump equal to $-\alpha\psi(x)$.

\section{Main results}
\label{main}
\setcounter{equation}{0}
It is expected that the two local perturbations, the magnetic field and the departure of $\Gamma$ from the straight line will not alter the essential spectrum. This is indeed the case:
\begin{theorem}\label{th.1}
Under the stated assumptions the essential spectrum of operator $H$ coincides with the half-line $\big[-\frac14\alpha^2, \infty\big)$.
\end{theorem}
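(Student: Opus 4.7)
The plan is to establish the two inclusions separately: for $[-\tfrac14\alpha^2,\infty)\subseteq\sigma_{\mathrm{ess}}(H)$, I would build a Weyl singular sequence for each $\lambda$ in that half-line, concentrated far from $\Omega$ along $\Gamma_0$; for the reverse inclusion I would use Persson's characterization of the bottom of the essential spectrum combined with the diamagnetic inequality. Concretely, fix $\lambda\geq -\tfrac14\alpha^2$ and rotate the axes so that $\Gamma_0$ lies along the first coordinate axis; the reference operator $H_0 := -\Delta-\alpha\delta_{\Gamma_0}$ then admits the generalized eigenfunctions $e^{iks}e^{-\alpha|t|/2}$ with $\lambda = -\tfrac14\alpha^2 + k^2$, plus analogous modes with nontrivial transverse oscillation for higher energies. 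Multiplying such a generalized eigenfunction by suitably spread smooth cut-offs $\chi_n$ supported in mutually disjoint, simply connected strips $S_n \subset \mathbb{R}^2 \setminus \overline{\Omega}$ along $\Gamma_0$ produces the classical Weyl sequence $\Psi_n$ for $H_0$ at $\lambda$. Because $B \equiv 0$ on $S_n$, one may write $A|_{S_n} = \nabla\phi_n$ for some smooth real $\phi_n$, and the gauge-rotated functions $\varphi_n := e^{i\phi_n}\Psi_n$ satisfy
\begin{equation*}
(i\nabla+A)\varphi_n \;=\; e^{i\phi_n}\, i\nabla\Psi_n \qquad \text{on } S_n,
\end{equation*}
whence $H\varphi_n = e^{i\phi_n} H_0 \Psi_n$ pointwise; since $|\varphi_n| = |\Psi_n|$, the sequence $\{\varphi_n\}$ inherits the required Weyl-sequence properties (bounded norms, mutual orthogonality, weak null limit, vanishing defect) and exhibits $\lambda \in \sigma_{\mathrm{ess}}(H)$.

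For the converse inclusion I would invoke Persson's theorem,
\begin{equation*}
\inf\sigma_{\mathrm{ess}}(H) \;=\; \sup_{K\Subset\mathbb{R}^2}\, \inf\bigl\{\, q[\psi]\,:\,\psi\in\mathcal{H}^1(\mathbb{R}^2),\ \mathrm{supp}(\psi)\cap K = \emptyset,\ \|\psi\|=1\,\bigr\},
\end{equation*}
applied with $K = \overline{\Omega}$. Every admissible $\psi$ is supported where $B \equiv 0$ and $\Gamma = \Gamma_0$, so the diamagnetic inequality $|(i\nabla+A)\psi| \geq \bigl|\nabla|\psi|\bigr|$ yields
\begin{equation*}
q[\psi] \;\geq\; \int_{\mathbb{R}^2} \bigl|\nabla|\psi|\bigr|^2\,\mathrm{d}x\,\mathrm{d}y \,-\, \alpha\int_{\Gamma_0} |\psi|^2\,\mathrm{d}\mu \;\geq\; -\tfrac14\alpha^2\|\psi\|^2,
\end{equation*}
the last step being the well-known spectral estimate $-\Delta - \alpha\delta_{\Gamma_0} \geq -\tfrac14\alpha^2$ for the straight-line leaky wire. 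Hence $\inf\sigma_{\mathrm{ess}}(H) \geq -\tfrac14\alpha^2$, and in combination with the Weyl-sequence construction this proves the claimed identity.

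The main obstacle is the vector potential: since the magnetic flux through $\Omega$ may be nonzero, there is no global gauge on $\mathbb{R}^2$ in which $A$ becomes compactly supported, so one cannot reduce $H$ to the non-magnetic operator by a single gauge transformation. The two devices above---local gauges on simply connected strips for the Weyl construction, and the diamagnetic inequality for the Persson lower bound---bypass this by relying only on the compact support of $B$ itself, together with the local-deformation hypothesis on $\Gamma$.
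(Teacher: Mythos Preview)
Your argument is correct, but both halves differ in method from the paper. For the inclusion $[-\tfrac14\alpha^2,\infty)\subset\sigma_{\mathrm{ess}}(H)$ the paper avoids local gauges altogether: working in the Landau gauge $A(x,y)=\bigl(-\int_0^y B(x,t)\,\mathrm{d}t,\,0\bigr)$ one has $A\equiv 0$ on the entire half-plane $\{|x|>a\}$, so the Weyl functions can be taken to be the non-magnetic ones directly, with no gauge phase $e^{i\phi_n}$ needed. Your simply-connected-strip device is a valid substitute, just slightly heavier; incidentally the bound-state transverse mode already covers all $\lambda\ge-\tfrac14\alpha^2$, so the ``higher transverse modes'' are unnecessary. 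For the reverse inclusion the paper does not use Persson's theorem but rather Neumann bracketing $H\ge H_1\oplus H_2$ with $H_1$ on $\mathbb{R}^2\setminus\Omega$ and $H_2$ on $\Omega$: a direct one-dimensional reduction shows $H_1\ge-\tfrac14\alpha^2$, while $H_2$ is bounded below by a magnetic Neumann Laplacian on the square (after absorbing the $\delta$ term via a trace estimate) and hence has purely discrete spectrum. Your Persson-plus-diamagnetic route is shorter and conceptually clean, though it presupposes that Persson's formula applies to magnetic operators with $\delta$-interactions (it does, via the IMS localization identity, but this deserves a word of justification). One practical advantage of the paper's bracketing approach is that the intermediate inequality it produces for the exterior part is reused verbatim in the proof of Theorem~\ref{th.2}, so the extra work is not wasted.
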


Our main interest concerns the possibility that the applied magnetic field destroys the discrete spectrum. The main result of this letter is the following sufficient condition:
\begin{theorem}\label{th.2}
Adopt the same assumptions and suppose that $B$ is not identically zero. Then there exists a constant $\alpha_0=\alpha_0(\Omega,B)>0$ such that for $\alpha\in(0,\alpha_0]$ the spectrum of operator (\ref{Hformal}) below  $-\frac14\alpha^2$ is empty provided that $$\|\gamma'\|_\infty\le \frac{2 \alpha_0^2}{\alpha^2}-1.$$
\end{theorem}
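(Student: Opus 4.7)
The goal is to prove the form inequality $q[\psi]\ge -\tfrac{\alpha^2}{4}\|\psi\|^2$ for every $\psi\in\mathcal{H}^1(\mathbb{R}^2)$; combined with Theorem~\ref{th.1} this will force $\sigma(H)\subset[-\tfrac{\alpha^2}{4},\infty)$ and thereby rule out any eigenvalue below the essential spectrum. My plan is to use an IMS-type localization: pick a smooth partition of unity $\chi_1^2+\chi_2^2\equiv 1$ with $\chi_1\in C_c^\infty$ supported in a bounded enlargement $\Omega^\ast\Supset\Omega$ and $\chi_2$ vanishing on $\Omega$. The IMS identity gives
$$q[\psi]=q[\chi_1\psi]+q[\chi_2\psi]-\sum_{j=1,2}\|\,|\nabla\chi_j|\,\psi\|^2,$$
with the last two terms made arbitrarily small by flattening the cutoffs; it therefore suffices to establish each $q[\chi_j\psi]\ge -\tfrac{\alpha^2}{4}\|\chi_j\psi\|^2$ with some slack.

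On the support of $\chi_2\psi$ one has $B\equiv 0$ and $\Gamma=\Gamma_0$, so after gauging away $A$ this contribution reduces to the straight non-magnetic form whose spectral bottom is $-\tfrac{\alpha^2}{4}$, yielding $q[\chi_2\psi]\ge -\tfrac{\alpha^2}{4}\|\chi_2\psi\|^2$. For the interior piece $\chi_1\psi\in\mathcal{H}^1_0(\Omega^\ast)$, I would exploit the nontriviality of $B$ through the Ekholm--Kovařík magnetic Hardy inequality~\cite{EK05}: there exists a constant $\alpha_0=\alpha_0(\Omega,B)>0$ such that
$$\int_{\Omega^\ast}|(i\nabla+A)u|^2\,dx\ge 2\alpha_0^2\int_{\Omega^\ast}|u|^2\,dx\qquad\text{for all }u\in\mathcal{H}^1_0(\Omega^\ast).$$
In parallel, the $\delta$-interaction along the deformed arc is controlled using $d\mu=\sqrt{1+\gamma'(x)^2}\,dx\le (1+\|\gamma'\|_\infty)\,dx$ together with a standard Sobolev trace inequality, producing an estimate of the form
$$\alpha\int_{\Gamma\cap\Omega^\ast}|u|^2\,d\mu\le \tfrac{\alpha^2(1+\|\gamma'\|_\infty)}{2}\|u\|^2+\tfrac12\int_{\Omega^\ast}|(i\nabla+A)u|^2\,dx.$$
Under the hypothesis $\|\gamma'\|_\infty\le 2\alpha_0^2/\alpha^2-1$ the zeroth-order coefficient is at most $\alpha_0^2$, and the Hardy inequality absorbs both it and the kinetic-type remainder, leaving $q[\chi_1\psi]\ge -\tfrac{\alpha^2}{4}\|\chi_1\psi\|^2$ with room to spare.

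The hard part I expect is the quantitative interplay between the magnetic Hardy constant and the trace/Jacobian estimate along $\Gamma$: the constants in those two inequalities have to be matched precisely in order that the clean condition $\|\gamma'\|_\infty\le 2\alpha_0^2/\alpha^2-1$ emerges, and indeed the definition of $\alpha_0$ itself is essentially fixed by this matching. A secondary technical issue is handling the gauge transformation in the exterior: when the total flux of $B$ through $\Omega$ is nonzero no global gauge eliminates $A$ outside $\Omega$, so the support of $\chi_2$ must be arranged to be simply connected, or else the residual phase factor must be carried along in the comparison with the straight, non-magnetic operator. Once these two points are handled, summing the two bounds and passing the localization errors to zero yields $q[\psi]\ge -\tfrac{\alpha^2}{4}\|\psi\|^2$ and hence the theorem.
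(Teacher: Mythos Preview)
Your overall architecture --- split into an ``interior'' piece where the magnetic field yields a spectral gap and an ``exterior'' piece governed by the straight non-magnetic wire --- is exactly the paper's strategy. The substantive difference is the splitting device: the paper uses Neumann bracketing $H\ge H_1\oplus H_2$ on $\Omega$ and $\mathbb{R}^2\setminus\Omega$, which produces \emph{no} localization error, whereas you use IMS, which does.

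That difference is where your sketch breaks. You propose to ``pass the localization errors to zero'' by flattening the cutoffs, but flattening forces $\Omega^\ast$ to grow, and the constant in your Hardy-type bound $\int_{\Omega^\ast}|(i\nabla+A)u|^2\ge 2\alpha_0^2\|u\|^2$ for $u\in\mathcal{H}^1_0(\Omega^\ast)$ is nothing but the lowest Dirichlet eigenvalue of the magnetic Laplacian on $\Omega^\ast$. Since $B$ is compactly supported, a test function placed far from $\operatorname{supp}B$ shows that this eigenvalue tends to zero as $\Omega^\ast\uparrow\mathbb{R}^2$; hence $\alpha_0(\Omega^\ast)\to 0$ and you lose the very constant the theorem asserts to exist. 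Conversely, if you fix $\Omega^\ast$ once and for all, the IMS error $\int(|\nabla\chi_1|^2+|\nabla\chi_2|^2)|\psi|^2$ is of fixed size and is \emph{not} controlled by the slack $\big(\alpha_0^2-\tfrac{\alpha^2}{2}(1+\|\gamma'\|_\infty)\big)\|\chi_1\psi\|^2$, because $\|\chi_1\psi\|^2$ does not dominate $\|\psi\|^2$ on the transition annulus. The paper sidesteps this entirely: with Neumann bracketing the interior block lives on the fixed square $\Omega$, and the needed positivity is that of the magnetic \emph{Neumann} Laplacian there (Lemma~\ref{1lemma}), which is a genuinely nontrivial statement (unlike the Dirichlet case) and is where the hypothesis $B\not\equiv 0$ enters.

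Two smaller points. First, your displayed trace inequality has the wrong power: the standard estimate gives a coefficient $\tfrac{\alpha^2}{2}(1+\|\gamma'\|_\infty)^2$ on $\|u\|^2$, not $\tfrac{\alpha^2}{2}(1+\|\gamma'\|_\infty)$; the paper instead uses a domain-size trace bound (Lemma~\ref{2lemma}) with the \emph{same} constant on the kinetic and $L^2$ terms, which is what makes the clean formula for $\alpha_0$ come out. Second, your worry about gauging away $A$ on the non-simply-connected exterior is legitimate, but it is more easily resolved than you suggest: the diamagnetic inequality gives $q[\chi_2\psi]\ge q_{0,\mathrm{straight}}[\,|\chi_2\psi|\,]\ge-\tfrac{\alpha^2}{4}\|\chi_2\psi\|^2$ directly, with no gauge transformation needed. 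The paper's own exterior argument uses instead the explicit Landau gauge $A=(-\int_0^y B(x,t)\,\mathrm{d}t,0)$, which vanishes for $|x|>a$, and a hands-on estimate.
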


\section{Proofs}
\label{proofs}
\setcounter{equation}{0}

Let us begin with the \emph{proof of Theorem~\ref{th.1}}. First we are going to show that the essential spectrum of $H$ contains the half-line $\big[-\frac14\alpha^2, \infty\big)$. We employ the Weyl criterion \cite[Thm~VII.12]{RS81}. Put $\lambda=-\frac14\alpha^2+2p^2,\,p\ne0$, and consider the sequence of vectors
$$
\psi_k:\: \psi_k(x, y)=\frac{1}{\sqrt{k}}\, \mathrm{e}^{-\frac{\alpha}{2\sqrt{2}}|x-y|}\, \mathrm{e}^{i p(x+y)/2}\, \chi\left(\frac{x}{k}\right),
$$
where $\chi$ is a smooth function with support in $(1, 2)$ and $k\in \mathbb{N}$.

It is straightforward to check that $\psi_k\in \mathrm{Dom}(H)$; the idea is to use the fact that there is a part of plane where the magnetic field has no influence. Indeed, choosing the Landau gauge for the vector potential $A$, that is, putting $A(x,y)=\Big(-\int_0^y B(x,t)\,\mathrm{d}t,0\Big)$, we get $A(x,y)=0$ for $|x|>a$.
For $k$ large enough we thus have
\begin{align*}
\frac{\partial\psi_k(x,y)}{\partial x} &= \frac{1}{\sqrt{k}}\, \mathrm{e}^{-\frac{\alpha}{2\sqrt{2}}|x-y|}\, \mathrm{e}^{i p(x+y)/2}\, \bigg[\bigg( -\frac{\alpha}{2\sqrt{2}}\mathrm{sgn}(x-y) + \frac{ip}{2} \bigg) \chi\left(\frac{x}{k}\right) + \frac{1}{k}\chi'\left(\frac{x}{k}\right) \bigg], \\
\frac{\partial\psi_k(x,y)}{\partial y} &= \frac{1}{\sqrt{k}}\, \mathrm{e}^{-\frac{\alpha}{2\sqrt{2}}|x-y|}\, \mathrm{e}^{i p(x+y)/2}\, \bigg(\frac{\alpha}{2\sqrt{2}}\mathrm{sgn}(x-y) + \frac{ip}{2} \bigg),
\end{align*}
hence using \eqref{Hform} and $\mathrm{d}\mu = \sqrt{2}\,\mathrm{d}x$ we get by a direct computation
\begin{align*}
q[\psi_k] - \Big(\frac{\alpha^2}{4} +p^2\Big)\|\psi_k\|^2 &= \Big(\frac{\alpha^2}{4}+p^2\Big)\,\frac{2\sqrt{2}}{\alpha}\,\|\chi\|^2 + \frac{1}{k^2} \,\frac{2\sqrt{2}}{\alpha}\,\|\chi'\|^2 -\alpha\sqrt{2}\,\|\chi\|^2 \\ & \; + \Big(\frac{\alpha^2}{4}-p^2\Big)\,\frac{2\sqrt{2}}{\alpha}\,\|\chi\|^2 = \frac{1}{k^2} \,\frac{2\sqrt{2}}{\alpha}\,\|\chi'\|^2 = \mathcal{O}(k^{-2}),
\end{align*}
and since  $\|\psi_k\|^2= \frac{2\sqrt{2}}{\alpha}\,\|\chi\|^2$ is independent of $k$, we infer that $-\frac14\alpha^2+p^2\in \sigma(H)$. Moreover, one can choose a sequence $\{k_n\}_{n=1}^\infty$ such that $k_n\to\infty$ as $n\to\infty$ and the supports of the functions $\psi_{k_n}$ are mutually disjoint which means that $-\frac14\alpha^2+p^2\in \sigma_\mathrm{ess}(H)$.

Next we have to establish that the spectrum of $H$ below $-\frac14\alpha^2$, if any, can be only discrete. Neumann bracketing yields the estimate
\begin{equation}
\label{bracketing}
H\ge H_1\oplus H_2,
\end{equation}
where $H_1$ is the Neumann restriction of $H$ to $L^2(\mathbb{R}^2\backslash \Omega)$ and $H_2$ is the complementary Neumann restriction to $L^2(\Omega)$. Denoting $\Gamma_\Omega:= \Gamma\upharpoonright\Omega$ and using the conventional abbreviations for partial derivatives, one can check easily that
\begin{align*}
& \int_{\mathbb{R}^2\backslash \Omega} |i \nabla u + A u|^2\,\mathrm{d}x\,\mathrm{d}y- \alpha \int_{\Gamma\backslash \Gamma_\Omega} |u|^2\,\mathrm{d}\mu \\ & \ge
\int_{\mathbb{R}\backslash (-a, a)} \int_{\mathbb{R}}|u_y|^2\,\mathrm{d}y\,\mathrm{d}x+ \int_{\mathbb{R}\backslash (-a, a)}\int_{\mathbb{R}}\Big|i u_x-u\! \int_0^y B(x, t)\,\mathrm{d}t\Big|^2\,\mathrm{d}x\,\mathrm{d}y-  \alpha \sqrt{2} \int_{\mathbb{R}\backslash (-a, a)}|u(x, x)|^2\,\mathrm{d}x
\\ &
\ge\int_{\mathbb{R}\backslash (-a, a)} \int_{\mathbb{R}}|u_y|^2\,\mathrm{d}y\,\mathrm{d}x- \frac{\alpha}{\sqrt{2}} \int_{\mathbb{R}\backslash (-a, a)}|u(x, x)|^2 \,\mathrm{d}x \\ & \quad + \int_{\mathbb{R}\backslash (-a, a)}\int_{\mathbb{R}}\Big|i u_x-u \int_0^y B(x, t)\,\mathrm{d}t\Big|^2\,\mathrm{d}x\,\mathrm{d}y-  \frac{\alpha}{\sqrt{2}} \int_{\mathbb{R}\backslash (-a, a)}|u(x, x)|^2\,\mathrm{d}x.
\end{align*}
In the last term on the right-hand side we may replace $\int_{\mathbb{R}\backslash (-a, a)}|u(x, x)|^2\,\mathrm{d}x$ by $\frac{\alpha}{\sqrt{2}} \int_{\mathbb{R}\backslash (-a, a)}|u(y, y)|^2\,\mathrm{d}y$. Noting then that
for any $y\in \mathbb{R}\setminus[-a,a]$ the integral $\int_0^y B(x,t)\,\mathrm{d}t$ depends only on $x$ and using the fact that the principal eigenvalue of operator $-\frac{\mathrm{d}^2}{\mathrm{d} t^2}- \frac{\alpha}{\sqrt{2}} \delta(t)$ is $-\frac18{\alpha^2}$, the above estimate implies
\begin{align}
\int_{\mathbb{R}^2\backslash \Omega} |i \nabla u + A u|^2\,\mathrm{d}x\,\mathrm{d}y- \alpha \int_{\Gamma\backslash \Gamma_\Omega} |u|^2\,\mathrm{d}\mu
& \ge-\frac{\alpha^2}{8}\, \bigg(\int_{\mathbb{R}\backslash (-a, a)}\int_{\mathbb{R}} + \int_{\mathbb{R}}\int_{\mathbb{R}\backslash (-a,a)}\bigg) |u|^2\,\mathrm{d}x\,\mathrm{d}y \nonumber \\ \label{final} & \ge-\frac{\alpha^2}{4}\int_{\mathbb{R}^2\backslash \Omega}|u|^2\,\mathrm{d}x\,\mathrm{d}y\,;
\end{align}
this means that the spectrum of $H_1$ below $-\frac14{\alpha^2}$ is empty.

It remains to deal with operator $H_2$. Our task will be done if we check that its spectrum is purely discrete; by minimax principle, it is sufficient to show that $H_2$ is bounded from below by an operator with a purely discrete spectrum. To this aim we will estimate the integral $\int_{\Gamma_\Omega} |u|^2\,\mathrm{d}\mu$ appearing in the singular perturbation using the magnetic Sobolev norm of $u$. To begin with, it is easy to see that for any fixed $x$ there is a $y_0=y_0(x)\in(-a, a)$ such that
$$
|u(x, y_0)|^2\le \frac{1}{2a} \int_{-a}^a |u(x, z)|^2 dz.
$$
In view of the identity
$$
|u(x,y)|^2 =\int_{y_0}^y \frac{\partial}{\partial z}|u(x,z)|^2\,\mathrm{d}z +|u(x,y_0)|^2,
$$
where the derivative in the integral equals $2\,\mathrm{Re}\big(\overline{u(x,z)}u_z(x,z)\big)$, we have the estimate
\begin{align*}
|u(x, y)|^2 & \le \varepsilon \int_{-a}^a |u_z(x, z)|^2\,dz+ \frac{1}{\varepsilon}\int_{-a}^a |u(x, z)|^2\,\mathrm{d}z +\frac{1}{\sqrt{2a}}\int_{-a}^a |u(x, z)|^2\,\mathrm{d}z\\ & \le \varepsilon \int_{-a}^a |u_z(x, z)|^2\,dz+\left(\frac{1}{\varepsilon}+ \frac{1}{\sqrt{2a}}\right)\int_{-a}^a |u(x, z)|^2\,\mathrm{d}z.
\end{align*}
for any $\varepsilon>0$. Choosing thus $A=\left(-\int_0^y B(x, t)\,dt, 0\right)$ we infer that
\begin{align*}
\int_{\Gamma_\Omega} |u|^2\, \mathrm{d}\mu &=  \int_{-a}^a  |u(x, \gamma(x))|^2\,\sqrt{1+\gamma'(x)^2}\,\mathrm{d}x \\ & \le  \sqrt{1+\|\gamma'\|^2_\infty}\,\left(\varepsilon \int_\Omega |i \nabla u+A u|^2\,\mathrm{d}x\,\mathrm{d}y+\left(\frac{1}{\varepsilon}+\frac{1}{\sqrt{2a}}\right) \int_\Omega |u|^2\,\mathrm{d}x\,\mathrm{d}y\right).
\end{align*}
This means that  $H_2$ is, in the sense of quadratic forms, estimated from below by the operator
$$
\left(1-\alpha \varepsilon\sqrt{1+\|\gamma'\|^2_\infty}\right)  \big(i \nabla u+A\big)^2- \alpha \left(\frac{1}{\varepsilon}+\frac{1}{\sqrt{2a}}\right)
$$
which for small enough values of $\varepsilon$ has a purely discrete spectrum. This establishes the claim of Theorem~\ref{th.1}.


\medskip

In order to \emph{prove Theorem~\ref{th.2}} we need the following two lemmata:

\begin{lemma}\label{1lemma}
Assume that $B\in L_{\mathrm{loc}}(\mathbb{R}^2)$ is not identically zero on a bounded domain $\omega\subset\mathbb{R}^2$, then the principal eigenvalue of the magnetic Neumann Laplacian on $\omega$ with the field $B$ is positive.
\end{lemma}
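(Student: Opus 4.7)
The plan is to argue by contradiction. Denote the magnetic Neumann quadratic form by $q_A[u]=\int_\omega |(i\nabla+A)u|^2\,\mathrm{d}x\,\mathrm{d}y$ with form domain $\mathcal{H}^1(\omega)$; since $q_A\ge 0$ the principal eigenvalue $\lambda_1$ is non-negative. Choosing, for instance, the Landau gauge $A(x,y)=\bigl(-\int_0^y B(x,t)\,\mathrm{d}t,\,0\bigr)$ and using the boundedness of $\omega$, one obtains $A\in L^\infty(\omega)$, so $q_A$ is equivalent on $\mathcal{H}^1(\omega)$ to the usual Dirichlet form modulo an $L^2$-perturbation. Since $\mathcal{H}^1(\omega)\hookrightarrow L^2(\omega)$ is compact, the operator associated with $q_A$ has compact resolvent, and $\lambda_1$ is attained by some eigenfunction $u\in\mathcal{H}^1(\omega)\setminus\{0\}$.

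Suppose for contradiction that $\lambda_1=0$. Then $q_A[u]=0$, i.e., $(i\nabla+A)u=0$ a.e.\ on $\omega$. The diamagnetic (Kato) inequality $\bigl|\nabla|u|\bigr|\le|(i\nabla+A)u|$ then yields $\nabla|u|=0$ a.e., and since $\omega$ is connected, $|u|$ equals a constant $c$, which must be strictly positive because $u\not\equiv 0$. In particular $u$ vanishes nowhere on $\omega$.

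Writing formally $u=c\,\mathrm{e}^{i\phi}$, so that $\nabla u=iu\,\nabla\phi$, and combining with the identity $\nabla u=iAu$ (which is equivalent to $(i\nabla+A)u=0$), we conclude $\nabla\phi=A$ a.e.\ on $\omega$. Hence $A$ is locally a gradient, its distributional curl vanishes, and $B=\mathrm{rot}\,A\equiv 0$ on $\omega$. This contradicts the hypothesis that $B$ is not identically zero on $\omega$, proving $\lambda_1>0$.

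The main technical point is the rigorous handling of the phase argument for an $\mathcal{H}^1$-function: while $\phi$ itself may only be well-defined modulo $2\pi$ (and globally only if $\omega$ is simply connected), the $1$-form $\nabla\phi$ given invariantly by $-i\bar u\,\nabla u/|u|^2$ lies in $L^2(\omega)$, coincides with $A$ almost everywhere, and is closed in the distributional sense, which is all that is required to deduce $B\equiv 0$. The other ingredients — compactness of the resolvent of the magnetic Neumann Laplacian and the diamagnetic inequality — are classical under the assumed local regularity of $B$.
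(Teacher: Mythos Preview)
Your argument is correct and considerably shorter than the paper's, but it follows a genuinely different route. The paper proceeds constructively: it splits the magnetic form via the diamagnetic inequality into a non-magnetic Dirichlet part on~$\omega$ plus a magnetic contribution on a ball $\mathcal{B}\subset\omega$ carrying nonzero flux, invokes Lemma~3.1 of \cite{EK05} to bound the latter from below by $c_0\|u\|^2_{L^2(\mathcal{B})}$, and then decomposes $|u|$ along the constant Neumann ground state and its orthogonal complement to obtain the explicit lower bound~\eqref{lower bound}. Your approach is instead qualitative: compactness gives a minimiser, and if the minimum were zero the minimiser would be a nowhere-vanishing solution of $(i\nabla+A)u=0$, forcing $A$ to be locally exact and hence $B\equiv 0$. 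What the paper's argument buys is an effective estimate on $\kappa=\kappa_{\Omega,B}$, which in turn makes the constant $\alpha_0$ in Theorem~\ref{th.2} (see~\eqref{alpha0}) in principle computable from $B$, $\Omega$, and the second Neumann eigenvalue; your contradiction argument yields only the bare positivity, which is all that the \emph{statement} of Theorem~\ref{th.2} requires but loses this quantitative information. One minor caveat: your claim that the Landau gauge gives $A\in L^\infty(\omega)$ from $B\in L_{\mathrm{loc}}$ alone is not quite justified without further regularity of $B$, but the paper is equally informal on this point and in the application $\omega=\Omega$ is a square with $B$ compactly supported, so the issue is harmless here.
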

\begin{proof}
We have to demonstrate that
$$ 
\underset{\mathcal{H}^1(\omega),\,\|u\|=1}{\mathrm{inf}} \int_\omega |i \nabla u+ A u|^2\,\mathrm{d}x\,\mathrm{d}y \ge c
$$ 
holds for some positive $c$.

Let $\mathcal{B}=\mathcal{B}(p_0, R)\subset \omega$ be a ball such that $\int_{\mathcal{B}} B(x, y)\,\mathrm{d}x\,\mathrm{d}y$ is not zero. We are going to use the technics developed in \cite{EKP16}. By diamagnetic inequality we get
\begin{align}\nonumber
\int_\omega |i \nabla u+A u|^2\,\mathrm{d}x\,\mathrm{d}y &\ge \frac{1}{2} \int_\omega |i \nabla u+A u|^2\,\mathrm{d}x\,\mathrm{d}y+\frac{1}{2} \int_{\mathcal{B}} |i \nabla u+A u|^2\,\mathrm{d}x\,\mathrm{d}y \\ \label{diamagn.} &\ge  \frac{1}{2} \int_\omega |\nabla |u||^2\,\mathrm{d}x\,\mathrm{d}y+\frac{1}{2} \int_{\mathcal{B}} |i \nabla u+A u|^2\,\mathrm{d}x\,\mathrm{d}y.
\end{align}
According to Lemma\,3.1 in \cite{EK05} there is a constant $c_0=c_0(B)>0$ such that
$$
\int_{\mathcal{B}} |i \nabla u+A u|^2\,\mathrm{d}x\,\mathrm{d}y\ge c_0 \int_{\mathcal{B}} |u|^2\,\mathrm{d}x\,\mathrm{d}y.
$$
The estimate \eqref{diamagn.} implies that for any $u$ satisfying $\|u\|=1$ we have
\begin{align}\nonumber
& \int_\omega |i \nabla u+A u|^2\,\mathrm{d}x\,\mathrm{d}y \ge \frac{1}{2} \int_\omega |\nabla |u||^2\,\mathrm{d}x\,\mathrm{d}y+\frac{c_0}{2} \int_{\mathcal{B}} |u|^2\,\mathrm{d}x\,\mathrm{d}y\\ \label{estimate1} & \quad \ge \mathrm{min}\big\{\textstyle{\frac12}, \textstyle{\frac12}c_0\big\} \underset{ \underset{w\ge0, \|w\|=1}{w\in \mathcal{H}^1(\omega),}}{\mathrm{inf}} \left( \int_\omega |\nabla w|^2\,\mathrm{d}x\,\mathrm{d}y+ \int_{\mathcal{B}} |w|^2\,\mathrm{d}x\,\mathrm{d}y\right)
\end{align}
Let us estimate the right-hand side of \eqref{estimate1}. For any $w\in \mathcal{H}^1(\omega)$ with the unit $L^2$ norm we use the representation
$$
w=\beta \varphi+ f,
$$
where $\varphi=\frac{1}{\sqrt{\mathrm{vol} (\omega)}}$ is the positive normalized ground state eigenfunction of the Neumann Laplacian on $\omega$, $f$ is orthogonal to $\varphi$, and
\begin{equation}\label{condition}
|\beta|^2+\|f\|_{L^2(\omega)}^2=1.
\end{equation}
This choice means that
$$
\int_\omega |\nabla f|^2\,\mathrm{d}x\,\mathrm{d}y\ge \lambda_2\int_\omega |f|^2\,\mathrm{d}x\,\mathrm{d}y,
$$
where $\lambda_2$ is the second eigenvalue of the Neumann Lapacian on $\omega$, and consequently
\begin{align}\nonumber
\int_\omega |\nabla w|^2\,\mathrm{d}x\,\mathrm{d}y+ \int_{\mathcal{B}} |w|^2\,\mathrm{d}x\,\mathrm{d}y &= \int_\omega |\nabla f|^2\,\mathrm{d}x\,\mathrm{d}y+ \int_{\mathcal{B}} |\beta \varphi+ f |^2\,\mathrm{d}x\,\mathrm{d}y
\\ \label{estimate} &\ge \lambda_2\int_\omega |f|^2\,\mathrm{d}x\,\mathrm{d}y +\int_{\mathcal{B}} |\beta \varphi+ f |^2\,\mathrm{d}x\,\mathrm{d}y.
\end{align}
The functions $\varphi$ and $f$ are orthogonal on $\omega$ but not on $\mathcal{B}$, hence to estimate further the right-hand side of (\ref{estimate}) we have to consider separately three cases. The second integral is certainly not smaller than $\big( \|\beta\varphi\|_{L^2(\mathcal{B})} - \|f\|_{L^2(\mathcal{B})}\big)^2$, hence if
$$
|\beta|^2 \int_{\mathcal{B}} |\varphi|^2\,\mathrm{d}x\,\mathrm{d}y\ge 4 \int_{\mathcal{B}} |f|^2\,\mathrm{d}x\,\mathrm{d}y
$$
the expression in question can be estimated as
$$
\int_{\mathcal{B}} |\beta \varphi+ f |^2\,\mathrm{d}x\,\mathrm{d}y \ge \lambda_2\int_\omega |f|^2\,\mathrm{d}x\,\mathrm{d}y+\frac{ |\beta|^2}{4} \int_{\mathcal{B}} |\varphi|^2\,\mathrm{d}x\,\mathrm{d}y,
$$
and similarly for
$$
\int_{\mathcal{B}} |f|^2\,\mathrm{d}x\,\mathrm{d}y\ge 4 |\beta|^2 \int_{\mathcal{B}}|\varphi|^2\,\mathrm{d}x\,\mathrm{d}y
$$
the lower bound is
$$
\int_{\mathcal{B}} |\beta \varphi+ f |^2\,\mathrm{d}x\,\mathrm{d}y \ge \lambda_2 \int_\omega |f|^2\,\mathrm{d}x\,\mathrm{d}y+|\beta|^2 \int_{\mathcal{B}} |\varphi|^2\,\mathrm{d}x\,\mathrm{d}y.
$$
In the remaining case when
$$
\frac{1}{4} \int_{\mathcal{B}} |f|^2\,\mathrm{d}x\,\mathrm{d}y< |\beta|^2 \int_{\mathcal{B}} |\varphi|^2\,\mathrm{d}x\,\mathrm{d}y<4 \int_{\mathcal{B}} |f|^2\,\mathrm{d}x\,\mathrm{d}y
$$
we just know that the squared norm difference is nonnegative. Instead, we then split the first term on the right-hand side of (\ref{estimate}) and estimate one half of it from below from the last inequality obtaining
$$
\frac{\lambda_2}{2}\int_\omega |f|^2\,\mathrm{d}x\,\mathrm{d}y+\frac{ |\beta|^2 \lambda_2}{8} \int_{\mathcal{B}} |\varphi|^2\,\mathrm{d}x\,\mathrm{d}y.
$$
Putting these bounds together we get
$$
\int_\omega |\nabla w|^2\,\mathrm{d}x\,\mathrm{d}y+ \int_{\mathcal{B}} |w|^2\,\mathrm{d}x\,\mathrm{d}y\ge \frac{\lambda_2}{2}\int_\omega |f|^2\,\mathrm{d}x\,\mathrm{d}y + \frac{|\beta|^2}{4} \min\left\{1, \frac{\lambda_2}{2}\right\} \frac{\mathrm{vol}(\mathcal{B})}{\mathrm{vol}(\omega)}.
$$
If $\beta\ge 1/2$ the right-hand side of the last inequality does not exceed $\frac{1}{16} \min\left\{1, \frac{\lambda_2}{2}\right\} \frac{\mathrm{vol}(\mathcal{B})}{\mathrm{vol}(\omega)}$, in the opposite case relation \eqref{condition} shows that the lower bound is $\frac38\lambda_2 > \frac{1}{16} \min\left\{1, \frac{\lambda_2}{2}\right\} \frac{\mathrm{vol}(\mathcal{B})}{\mathrm{vol}(\omega)}$. Combining finally this conclusion with relation  \eqref{estimate1} we arrive at the bound
\begin{equation}\label{lower bound}
\mathrm{inf} \sigma(-\Delta_N^\omega) \ge\frac{\mathrm{vol}(\mathcal{B})}{16 \mathrm{vol}(\omega)} \min\left\{1, \frac{\lambda_2}{2}\right\}\min\left\{1, \frac{c_0}{2}\right\},
\end{equation}
which proves the lemma.
\end{proof}

\medskip

We also need the following simple estimate:
\begin{lemma}\label{2lemma}
Let $I\subset \mathbb{R}$ be an interval and denote by $|I|$ its length, then for any $g\in \mathcal{H}^1(I)$ and all $x\in I$ we have
$$
|g(x)|^2\le 2|I| \int_I |g'(t)|^2\,\mathrm{d}t +\frac{2}{|I|} \int_I |g(t)|^2\, \mathrm{d}t
$$
\end{lemma}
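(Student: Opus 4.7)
The plan is to reduce the pointwise bound to an averaging argument plus a standard weighted Young's inequality. The ingredients are exactly those used already in the proof of Theorem~\ref{th.1}, so this lemma is genuinely a lightweight technical device rather than a hard result.

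First, I would invoke the mean value theorem for integrals: since $\frac{1}{|I|}\int_I |g(t)|^2\,\mathrm{d}t$ is the average of the continuous nonnegative function $|g|^2$ on $I$, there exists a point $y_0\in I$ with
\begin{equation*}
|g(y_0)|^2\le \frac{1}{|I|}\int_I |g(t)|^2\,\mathrm{d}t.
\end{equation*}
Next, for arbitrary $x\in I$ the fundamental theorem of calculus applied to $|g|^2$ (which belongs to $\mathcal{H}^1(I)\subset \mathrm{AC}(I)$) gives
\begin{equation*}
|g(x)|^2 = |g(y_0)|^2 + \int_{y_0}^x 2\,\mathrm{Re}\bigl(\overline{g(t)}\,g'(t)\bigr)\,\mathrm{d}t,
\end{equation*}
exactly as in the derivation used for Theorem~\ref{th.1}.

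Then I would estimate the integrand by $2|g(t)||g'(t)|$ and apply Young's inequality $2ab\le \varepsilon a^2+\varepsilon^{-1}b^2$ with the weight $\varepsilon=|I|$:
\begin{equation*}
\int_{y_0}^x 2|g(t)||g'(t)|\,\mathrm{d}t \le |I|\int_I |g'(t)|^2\,\mathrm{d}t + \frac{1}{|I|}\int_I |g(t)|^2\,\mathrm{d}t.
\end{equation*}
Combining this with the bound on $|g(y_0)|^2$ yields
\begin{equation*}
|g(x)|^2 \le |I|\int_I |g'(t)|^2\,\mathrm{d}t + \frac{2}{|I|}\int_I |g(t)|^2\,\mathrm{d}t,
\end{equation*}
which is in fact slightly stronger than the claimed inequality (the constant $|I|$ is not sharpened to $2|I|$, but the stated form follows trivially by enlarging the coefficient).

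There is no real obstacle here; the only points worth checking carefully are that the averaging argument still gives a genuine point $y_0$ when $g$ is only in $\mathcal{H}^1(I)$ (which is fine because $\mathcal{H}^1(I)$ functions have continuous representatives in dimension one), and that the integral from $y_0$ to $x$ is dominated by the integral over all of $I$ regardless of the relative position of $y_0$ and $x$. Both are immediate, so the proof is essentially a two-line computation.
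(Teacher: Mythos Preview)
Your argument is correct and follows essentially the same scheme as the paper's own proof: pick a point $y_0\in I$ where $|g(y_0)|^2$ is bounded by the mean value $|I|^{-1}\int_I|g|^2$, then control $|g(x)|^2$ via the fundamental theorem of calculus. The only difference is in how the second step is executed. The paper writes $g(x)=g(x_0)+\int_{x_0}^x g'(t)\,\mathrm{d}t$, applies $|a+b|^2\le 2|a|^2+2|b|^2$, and then Cauchy--Schwarz to $\int g'$; you instead differentiate $|g|^2$ directly (exactly the manoeuvre already used in the proof of Theorem~\ref{th.1}) and apply Young's inequality pointwise. Your route yields the slightly sharper constant $|I|$ in front of $\int_I|g'|^2$ rather than $2|I|$, from which the stated inequality follows a fortiori. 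Both arguments are two-line computations; neither has any real advantage over the other beyond the improved constant you obtain.
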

\begin{proof}
One can check easily that there exists a point $x_0\in I$ such that
$$
|g(x_0)|\le \frac{1}{\sqrt{|I|}} \sqrt{\int_I |g(t)|^2 dt}\,;
$$
this fact in combination with Schwarz inequality yields
$$|g(x)|^2=\left|\int_{x_0}^x g'(t)\,\mathrm{d}- g(x_0)\right|^2\le 2|I| \int_I |g'(t)|^2\,\mathrm{d} +\frac{2}{|I|} \int_I |g(t)|^2\,\mathrm{d}.$$
which is what we have set up to prove.
\end{proof}

\medskip

After this preliminary let us return to proof of the theorem. Applying Lemma~\ref{2lemma} to $u(x, y)$ on $\Omega$ for a fixed $x$ one gets
\begin{equation}\label{u}
|u(x, y)|\le 2C \int_{-a}^a (|u_t(x, t)|^2+|u(x, t)|^2)\,dt,\quad y\in (-a, a),
\end{equation}
where
\begin{equation}\label{constant}
C:= \mathrm{max}\Big\{2a, \frac{1}{2a}\Big\}.
\end{equation}
In what follows we use again the notation $\Gamma_\Omega=\{x, \gamma(x)\}_{x\in (-a, a)}$. We have
$$
\int_{\Gamma_\Omega} |u|^2\, \mathrm{d}\mu=\int_{-a}^a |u(x, \gamma(x))|^2 \sqrt{1+\gamma'(x)^2}\,\mathrm{d}x
$$
and relation\eqref{u} allows to estimate the above expression as follows,
$$
\int_{\Gamma_\Omega} |u^2|\,\mathrm{d}\mu \le C\int_{-a}^a \sqrt{1+\gamma'(x)^2} \int_{-a}^a \left(|u_y(x, y)|^2+|u(x, y)|^2\right)\,\mathrm{d}y\,\mathrm{d}x,
$$
with the constant $C$ defined in (\ref{constant}); in this way we arrive to the bound
\begin{equation}\label{trace}
\int_{\Gamma_\Omega} |u|^2\,\mathrm{d}\mu \le C \sqrt{1+ \|\gamma'\|^2_\infty}  \int_\Omega \left(|u_y(x, y)|^2+|u(x, y)|^2\right)\,\mathrm{d}x\,\mathrm{d}y.
 \end{equation}
Using once more Landau gauge for the vector potential, $A=\left(-\int_0^y B(x, t)\,\mathrm{d}t, 0\right)$, we infer from \eqref{trace} that
\begin{equation}\label{magnetic}
\int_{\Gamma_\Omega} |u|^2\,d \mu \le C \sqrt{1+\|\gamma'\|^2_\infty}\left(\int_\Omega |i \nabla u+A u|^2\,\mathrm{d}x\,\mathrm{d}y+ \int_\Omega |u|^2\,\mathrm{d}x\,\mathrm{d}y\right).
\end{equation}

Next we use Lemma~\ref{1lemma} by which the ground state eigenvalue of the magnetic Neumann Laplacian on $\Omega$ is not smaller than some $\kappa=\kappa_{\Omega, B}>0$. The inequality (\ref{magnetic}) implies
\begin{align}
\nonumber \int_\Omega |i \nabla u &+ A u|^2\,\mathrm{d}x\,\mathrm{d}y- \alpha \int_{\Gamma_\Omega}  |u^2|\,\mathrm{d}\mu \\ \nonumber & \ge
\left(1-C \alpha\sqrt{1+\|\gamma'\|^2_\infty}\right) \int_\Omega |i \nabla u+A u|^2\,\mathrm{d}x\,\mathrm{d}y-
C \alpha\sqrt{1+\|\gamma'\|^2_\infty} \int_\Omega |u|^2\,\mathrm{d}x\,\mathrm{d}y \\ \nonumber & \ge \kappa\left(1- C\alpha\sqrt{1+\|\gamma'\|^2_\infty}\right) \int_\Omega |u|^2\,\mathrm{d}x\,\mathrm{d}y- C\alpha \sqrt{1+\|\gamma'\|^2_\infty} \int_\Omega |u|^2\,\mathrm{d}x\,\mathrm{d}y
\\ & \label{magn trace} = \left(\kappa- C \alpha  (1+\kappa)\sqrt{1+\|\gamma'\|^2_\infty}\right)\int_\Omega |u|^2\,\mathrm{d}x\,\mathrm{d}y,\end{align}
hence by choosing
\begin{equation}\label{alpha}
\alpha \sqrt{1+\|\gamma'\|^2_\infty}\le \frac{\kappa}{(\kappa+1) C}
\end{equation}
we achieve that the right-hand side of \eqref{magn trace} becomes non negative. In combination with \eqref{final} this means that the spectrum below $-\frac14{\alpha^2}$ is empty.

To finish the proof we have to specify the range of the values of $\alpha$ compatible with \eqref{alpha}. We note that $\|\gamma'\|_\infty\ge 1$. Indeed, was it not the case we get a contradiction because the identity
$$
\gamma(a)=\int_{-a}^a\gamma'(t)\,\mathrm{d}t+ \gamma(-a)
$$
would lead to a contradiction with $\gamma(\pm a )=\pm a$. Hence $\sqrt{1+\|\gamma'\|^2_\infty}\ge \sqrt{2}$ and the condition \eqref{alpha} may be satisfied only if
\begin{equation}\label{alpha0}
\alpha \le \alpha_0:=\frac{\kappa}{\sqrt{2} (\kappa+1) C},
\end{equation}
where $C$ is the number given by \eqref{constant}. For a fixed $\alpha\in(0,\alpha_0]$ the condition \eqref{alpha} then reads $\sqrt{1+\|\gamma'\|^2_\infty}\le \frac{\sqrt{2}\, \alpha_0}{\alpha}$ which shows how much the curve $\Gamma$ may depart from the straight line without giving rise to bound states,
$$
\|\gamma'\|_\infty\le \frac{2 \alpha_0^2}{\alpha^2}-1.
$$
This completes the proof of Theorem~\ref{th.2}.

\section{Concluding remarks}
\label{proofs}
\setcounter{equation}{0}

Let us add a few comments. First of all, for weak magnetic fields the value $\alpha_0$ of the critical coupling is expected to be small. By \eqref{alpha0} its behavior depends on the principal eigenvalue of the magnetic Neumann Laplacian on $\Omega$ which is expected to go zero as $B\to 0\:$; one could use, for instance, \cite{CEIS18}, with a bit of extra work considering first the operator on a region $\Omega'\supset\Omega$ with a smooth boundary. Furthermore, for a fixed $\gamma$ one can always achieve that the discrete spectrum is empty by choosing $\alpha$ small enough. In this respect it is useful to mention that the analogy with Dirichlet waveguides that we mentioned in the introduction as an inspiration is far from complete: a counterpart to the coupling strength $\alpha$ could be, in a sense, the inverse of the waveguide width but the latter cannot take arbitrary values, for smooth waveguides at least.

On the other hand, our geometric assumptions are rather restrictive and it would be useful to find an extension of the present result, at least to curves $\Gamma$ straight outside a compact region but not necessarily coming from a local deformation of a straight line.

\bigskip

\subsection*{Acknowledgment}

The authors are obliged to Hynek Kova\v{r}\'{\i}k for useful comments. The work of P.E. was in part supported by the European Union within the project CZ.02.1.01/0.0/0.0/16 019/0000778.


\begin{thebibliography}{10}

\bibitem{BEHL20} J.~Behrndt, P.~Exner, M.~Holzmann, V.~Lotoreichik: The Landau Hamiltonian with $\delta$-potentials supported on curves, \emph{Rev. Math. Phys.} \textbf{32} (2020), 2050010.

\bibitem{BEK05} D.~Borisov, T.~Ekholm, H.~Kova\v r\'{\i}k: Spectrum of the magnetic Schr\"odinger operator in a waveguide with combined boundary conditions, {\em Ann.~Henri Poincar\'e} {\bf 6} (2005), 327--342.

\bibitem{DP99} S.~de Bi\'evre, J.V.~Pul\'e: Propagating edge states for a magnetic Hamiltonian, \emph{Math. Phys. El. J.} \textbf{5} (1999), 3.

\bibitem{CEIS18} B.~Colbois, A.~El Soufi, S.~Ilias, A.~Savo: Eigenvalues upper bounds for the magnetic Schr\"odinger operator, \texttt{arXiv:1709.09482}.

\bibitem{EK05} T.~Ekholm, H.~Kova\v{r}\'{\i}k: Stability of the magnetic Schr\"{o}dinger operator in a waveguide, \emph{Comm. PDE} \textbf{30} (2005), 539--565.

\bibitem{EKP16} T.~Ekholm, H.~Kova\v{r}\'{\i}k, F.~Portmann: Estimates for the lowest eigenvalue of magnetic Laplacians, \emph{J. Math. Anal. Appl.} \textbf{439} (2016), 330--346.

\bibitem{EI01} P.~Exner, T.~Ichinose: Geometrically induced spectrum in curved leaky wires, \emph{J. Phys. A: Math. Gen.} \textbf{34} (2001), 1439--1450.

\bibitem{EK15} P.~Exner, H.~Kova\v{r}\'{\i}k: \emph{Quantum Waveguides}, Springer International, Heidelberg 2015.

\bibitem{ELPO18} P.~Exner, V.~Lotoreichik, A.~P\'erez-Obiol: On the bound states of magnetic Laplacians on wedges, \emph{Rep. Math. Phys.} \textbf{82} (2018), 161--185.

\bibitem{Iw85} A. Iwatsuka: Examples of absolutely continuous Schr\"odinger operators in magnetic fields, \emph{Publ. RIMS} \textbf{21} (1985), 385-401.

\bibitem{MP97} M.~Mantoiu, R.~Purice: Some propagation properties of the Iwatsuka model, \emph{Commun. Math. Phys.} \textbf{188} (1997), 691-708.

\bibitem{Ra17} N.~Raymond: \emph{Bound States of the Magnetic Schr\"odinger Operator}, EMS Publ., Z\"urich 2017.

\bibitem{RS81} M.~Reed, B.~Simon, \emph{Methods of Modern Mathematical Physics, I. Functional Analysis}, Academic Press, New York 1981.


\end{thebibliography}
\end{document}